\documentclass[10pt]{article}


\usepackage{amsthm,amsmath,amssymb}

\usepackage{graphicx}

\usepackage[colorlinks=true,citecolor=blue,linkcolor=blue,urlcolor=blue]{hyperref}

\sloppy

\theoremstyle{plain}
\newtheorem{theorem}{Theorem}[section]

\newtheorem{proposition}[theorem]{Proposition}

\theoremstyle{definition}

\newtheorem{example}[theorem]{Example}

\theoremstyle{remark}



\title{\bf Counting faces of graphical zonotopes}


\author{Vladimir Gruji\'c\\
\small Faculty of Mathematics\\[-0.8ex]
\small Belgrade University, Serbia\\[-0.8ex]}


\date{
\small Mathematics Subject Classifications: 05E05, 52B05, 16T05}

\begin{document}

\maketitle

\begin{abstract}
It is a classical fact that the number of vertices of the
graphical zonotope $Z_\Gamma$ is equal to the number of acyclic
orientations of a graph $\Gamma$. We show that the $f$-polynomial
of $Z_\Gamma$ is obtained as the principal specialization of the
$q$-analog of the chromatic symmetric function of $\Gamma$.

\bigskip\noindent\textbf{Keywords}: graphical zonotope, $f$-vector,
 graphical matroid, symmetric function
\end{abstract}

\section{Introduction}

The $f$-polynomial of an $n$-dimensional polytope $P$ is defined
by $f(P,q)=\sum_{i=0}^{n}f_i(P)q^{i},$ where $f_i(P)$ is the
number of $i$-dimensional faces of $P$. The $f$-polynomial
$f(\mathcal{Z}_\Gamma, q)$ of the graphical zonotope
$\mathcal{Z}_\Gamma$ is a combinatorial invariant of a finite,
simple graph $\Gamma$. The vertices of $\mathcal{Z}_\Gamma$ are in
one-to-one correspondence with regions of the graphical hyperplane
arrangement $\mathcal{H}_\Gamma$, which are enumerated by acyclic
orientations of $\Gamma$.

Stanley's chromatic symmetric function $\Psi(\Gamma)=\sum_{f
{\small\it proper}}\mathbf{x}_f$ of a graph $\Gamma=(V,E)$,
introduced in \cite{S}, is the enumerator function of proper
colorings $f:V\rightarrow\mathbb{N}$, where
$\mathbf{x}_f=x_{f(1)}\cdots x_{f(n)}$ and $f$ is proper if there
are no monochromatic edges.  The chromatic polynomial
$\chi(\Gamma, d)$ of the graph $\Gamma$, which counts proper
colorings with a finite number of colors, appears as the principal
specialization

$$\chi(\Gamma,d)=\mathbf{ps}(\Psi(\Gamma))(d)=\Psi(\Gamma)\mid_{x_1=\cdots=x_d=1,
x_{d+1}=\cdots=0}.$$ The number of acyclic orientations of
$\Gamma$ is determined by the value of the chromatic polynomial
$\chi(\Gamma,d)$ at $d=-1$, \cite{S1}

\begin{equation}\label{equalities}
a(\Gamma)=(-1)^{|V|}\chi(\Gamma,-1).
\end{equation}

There is a $q$-analog of the chromatic symmetric function
$\Psi_q(\Gamma)$ introduced in a wider context of the
combinatorial Hopf algebra of simplicial complexes considered in
\cite{BHM}. It is a symmetric function over the field of rational
functions in $q$. The principal specialization of $\Psi_q(\Gamma)$
is the $q$-analog of the chromatic polynomial $\chi_q(\Gamma,d)$.

The main result of this paper is the following generalization of
formula $(\ref{equalities})$

\begin{theorem}\label{main}
Let $\Gamma=(V,E)$ be a simple connected graph and
$\mathcal{Z}_\Gamma$ the corresponding graphical zonotope. Then
the $f$-polynomial of $\mathcal{Z}_\Gamma$ is given by
$$f(\mathcal{Z}_\Gamma,q)=(-1)^{|V|}\chi_{-q}(\Gamma,-1).$$
\end{theorem}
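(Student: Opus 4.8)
The plan is to evaluate both sides over the lattice of flats of the graphic matroid of $\Gamma$ and match them flat by flat. Let $L_\Gamma$ be the bond lattice, whose elements are the partitions $\pi$ of $V$ all of whose blocks induce connected subgraphs of $\Gamma$; for such a $\pi$ let $\Gamma/\pi$ be the graph on the blocks obtained by contraction, so $\Gamma/\pi$ has $|\pi|$ vertices and the associated flat has rank $|V|-|\pi|$. The first step is the geometric identity
\begin{equation}\label{faces}
f(\mathcal{Z}_\Gamma,q)=\sum_{\pi\in L_\Gamma}a(\Gamma/\pi)\,q^{\,|V|-|\pi|}.
\end{equation}
Here I would use that $\mathcal{Z}_\Gamma$ is the Minkowski sum of the segments $[0,e_i-e_j]$ over the edges $\{i,j\}\in E$, so its normal fan is the fan of the arrangement $\mathcal{H}_\Gamma$. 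For a functional $c=(c_v)_{v\in V}$ the face of $\mathcal{Z}_\Gamma$ in direction $c$ is the subsum over the edges with $c_i=c_j$; these edges form a flat $\pi$, and the face is a translate of $\mathcal{Z}_{\Gamma_\pi}$, where $\Gamma_\pi$ is the spanning subgraph of the edges inside the blocks of $\pi$, of dimension $|V|-|\pi|$. Two functionals with the same $\pi$ select the same face exactly when they order the blocks in the same way, and the admissible orderings are precisely the acyclic orientations of $\Gamma/\pi$; hence the faces of dimension $|V|-|\pi|$ supported on $\pi$ are counted by $a(\Gamma/\pi)$, giving \eqref{faces}.

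The second step is to record the expansion of the $q$-chromatic polynomial over the same lattice,
\begin{equation}\label{bond}
\chi_q(\Gamma,d)=\sum_{\pi\in L_\Gamma}q^{\,|V|-|\pi|}\,\chi(\Gamma/\pi,d),
\end{equation}
which I expect to read off from the definition of $\Psi_q(\Gamma)$ in \cite{BHM}: a colouring $f\colon V\to\{1,\dots,d\}$ is weighted by $q$ raised to the rank of its monochromatic subgraph, and grouping colourings according to the flat spanned by their monochromatic edges collapses $\mathbf{ps}(\Psi_q(\Gamma))$ into \eqref{bond}. I expect this step to be the main obstacle, since it requires translating the Hopf-algebraic $q$-deformation of \cite{BHM} into the flat-by-flat form and, in particular, checking that the exponent is the matroid rank $|V|-|\pi|$ and not the number of monochromatic edges (the two differ exactly when a colour class carries a cycle).

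Finally I would combine the two. Setting $d=-1$ and $q\mapsto-q$ in \eqref{bond} and using $(-1)^{|V|}(-1)^{|V|-|\pi|}=(-1)^{|\pi|}$ gives
\begin{align*}
(-1)^{|V|}\chi_{-q}(\Gamma,-1)
&=(-1)^{|V|}\sum_{\pi\in L_\Gamma}(-1)^{|V|-|\pi|}\,\chi(\Gamma/\pi,-1)\,q^{\,|V|-|\pi|}\\
&=\sum_{\pi\in L_\Gamma}(-1)^{|\pi|}\,\chi(\Gamma/\pi,-1)\,q^{\,|V|-|\pi|}.
\end{align*}
Because $\Gamma/\pi$ has $|\pi|$ vertices, the reciprocity \eqref{equalities} applied to $\Gamma/\pi$ reads $a(\Gamma/\pi)=(-1)^{|\pi|}\chi(\Gamma/\pi,-1)$, so the last expression is precisely the right-hand side of \eqref{faces}. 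Comparing with \eqref{faces} proves the theorem.
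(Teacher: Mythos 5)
Your proof is correct, and while it meets the paper at the same pivot identity $f(\mathcal{Z}_\Gamma,q)=\sum_{F\in\mathcal{F}(\Gamma)}a(\Gamma/F)\,q^{\mathrm{rk}(F)}$ (the paper's (\ref{f-polynomial}), your first display), it reaches the two sides of the theorem by genuinely different means. On the geometric side the content is the same but your derivation is direct: the face of the Minkowski sum in direction $c$ is the subsum over the equality edges of $c$, which form a flat, and the faces lying over a fixed flat are indexed by acyclic orientations of the contraction; the paper instead invokes normal-fan duality together with Stanley's theorem \cite[Theorem 2.6]{S2}, which counts faces of a central essential arrangement by regions of the restrictions $\mathcal{H}^{X}_\Gamma$, plus the identification $\mathcal{H}^{X^{F}}_\Gamma=\mathcal{H}_{\Gamma/F}$ --- so your version trades a citation for a short self-contained argument (only tighten the phrase ``order the blocks in the same way'': what determines the face is the induced orientation on edges between adjacent blocks, not the full order on blocks). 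The real divergence is on the algebraic side. The paper evaluates $(-1)^{|V|}\chi_{-q}(\Gamma,-1)$ Hopf-algebraically: $\chi_q(\Gamma,-1)=(\zeta_q\circ S)(\Gamma)$ by (\ref{inverse}), and the Humpert--Martin cancellation-free antipode formula (\ref{antipode}) then yields the flat sum, the signs cancelling via $|V|=\mathrm{rk}(F)+c(F)$. You avoid the antipode entirely: you expand $\chi_q(\Gamma,d)=\sum_{\pi}q^{|V|-|\pi|}\chi(\Gamma/\pi,d)$ over the bond lattice straight from the coloring description of $\Psi_q$ --- and the obstacle you flag resolves itself, since the character $\zeta_q(\Gamma)=q^{\mathrm{rk}(\Gamma)}$ weights a coloring by the matroid rank of its monochromatic subgraph by definition, and grouping colorings by the components of that subgraph is exactly the bijection with proper colorings of $\Gamma/\pi$ --- and then you apply Stanley's reciprocity (\ref{equalities}) termwise to each $\Gamma/\pi$. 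Thus your key external input is the classical 1973 reciprocity of \cite{S1}, where the paper's is the antipode formula of \cite{HM}: your route is more elementary and exhibits the theorem as (\ref{equalities}) applied flat-by-flat, while the paper's route is independent of (\ref{equalities}) and recovers it as the $q=0$ case. The one step you should make explicit is the (routine) remark that both sides of your bond-lattice expansion are polynomials in $d$ agreeing at all positive integers, so the substitution $d=-1$ is legitimate.
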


The cancellation-free formula for the antipode in the Hopf algebra
of graphs, obtained by Humpert and Martin in \cite{HM}, reflects
the fact that $f(\mathcal{Z}_\Gamma,q)$ depends only on the
graphical matroid $M(\Gamma)$ associated to $\Gamma$. For
instance, for any tree $T_n$ the graphical matroid is the uniform
matroid $M(T_n)=U_n^{n}$ and the corresponding graphical zonotope
is the cube $\mathcal{Z}_{T_n}=I^{n-1}$. Whitney's theorem from
1933 describes how two graphs with the same graphical matroid are
related \cite{W}. It can be used to find more interesting
nonisomorphic graphs with the same $f$-polynomials of
corresponding graphical zonotopes.

The paper is organized as follows. In section 2, we review the
basic facts about zonotopes. In section 3, the $q$-analog of the
chromatic symmetric function $\Psi_q(\Gamma)$ of a graph $\Gamma$
is introduced. Theorem \ref{main} is proved in section 4. We
present some examples and calculations in section 5.

\section{Zonotopes}

A {\it zonotope} $\mathcal{Z}=\mathcal{Z}(v_1,\ldots,v_m)$ is a
convex polytope determined by a collection of vectors
$\{v_1,\ldots,v_m\}$ in $\mathbb{R}^{n}$ as the Minkowski sum of
line segments
$$\mathcal{Z}=[-v_1,v_1]+\cdots+[-v_m,v_m].$$ It is a
projection of the $m$-cube $[-1,1]^{m}$ under the linear map
$\mathbf{t}\mapsto A\mathbf{t}, \mathbf{t}\in[-1,1]^{m}$, where
$A=[v_1\cdots v_m]$ is an $n\times m$-matrix whose columns are
vectors $v_1,\ldots,v_m$. The zonotope $\mathcal{Z}$ is symmetric
about the origin and all its faces are translations of zonotopes.

To a collection of vectors $\{v_1,\ldots,v_m\}$ is associated a
central arrangement of hyperplanes
$\mathcal{H}=\{H_{v_1},\ldots,H_{v_m}\}$, where $H_v$ denotes the
hyperplane perpendicular to a vector $v\in\mathbb{R}^{n}$. The
zonotope $\mathcal{Z}$ and the corresponding arrangement of
hyperplanes $\mathcal{H}$ are closely related. In fact the
associated fan $\mathcal{F}_\mathcal{H}$ of the arrangement
$\mathcal{H}$ is the normal fan $\mathcal{N}(\mathcal{Z})$ of the
zonotope $\mathcal{Z}$ (see \cite[Theorem 7.16]{Z}). It follows
that the face lattice of $\mathcal{F}_\mathcal{H}$ and the reverse
face lattice of $\mathcal{Z}$ are isomorphic. In particular,
vertices of $\mathcal{Z}$ correspond to regions of $\mathcal{H}$
and their total numbers coincide

\begin{equation}\label{vertices-regions}
f_0(\mathcal{Z})=r(\mathcal{H}).
\end{equation}

The faces of the zonotope $\mathcal{Z}$ are encoded by covectors
of the oriented matroid $\mathcal{M}$ associated to the collection
of vectors $\{v_1,\ldots,v_m\}$. The covectors are sign vectors

$$\mathcal{V}^{\ast}=\{\mathrm{sign}(v)\in\{+,-,0\}^{m}\mid v\in
\mathbb{R}^{n}\},$$ where
$\mathrm{sign}(v)_i=\left\{\begin{array}{cc}+, & \langle
v,v_i\rangle>0 \\ 0, & \langle v,v_i\rangle=0
\\ -, & \langle v,v_i\rangle<0\end{array}\right., \ i=1,\ldots,m.$
The face lattice of the zonotope $\mathcal{Z}$ is isomorphic to
the lattice of covectors componentwise induced by $+,-<0$ on
$\mathcal{V}^{\ast}$.

A special class of zonotopes is determined by simple graphs. To a
connected graph $\Gamma=(V,E)$, whose vertices are enumerated by
integers $V=\{1,\ldots,n\}$, are associated the {\it graphical
zonotope}
$$\mathcal{Z}_\Gamma=\mathcal{Z}(e_i-e_j\mid i<j, \{i,j\}\in E)$$
and the {\it graphical arrangement} in $\mathbb{R}^{n}$
$$\mathcal{H}_\Gamma=\{H_{e_i-e_j}\mid i<j, \{i,j\}\in E\}.$$
There is a bijective correspondence between regions of
$\mathcal{H}_\Gamma$ and acyclic orientations of $\Gamma$,
\cite[Proposition 2.5]{S2}, which by $(\ref{vertices-regions})$
implies

\begin{equation}\label{vertices-acyclic}
f_0(\mathcal{Z}_\Gamma)=r(\mathcal{H}_\Gamma)=a(\Gamma).
\end{equation}
The arrangement $\mathcal{H}_\Gamma$ is refined by the braid
arrangement $\mathcal{A}_{n-1}$ consisting of all hyperplanes
$H_{e_i-e_j}, 1\leq i<j\leq n$. Thus $\mathcal{Z}_\Gamma$ belongs
to a wider class of convex polytopes called generalized
permutohedra introduced in \cite{P}. Since arrangements
$\mathcal{H}_\Gamma$ and $\mathcal{A}_{n-1}$ are not essential we
take their quotients by the line $l: x_1=\cdots=x_n$ and without
confusing retain the same notation. Consequently
$\mathrm{dim}\mathcal{Z}_\Gamma=n-1$.

\begin{figure}[h!h!]
\centerline{\includegraphics[width=8cm]{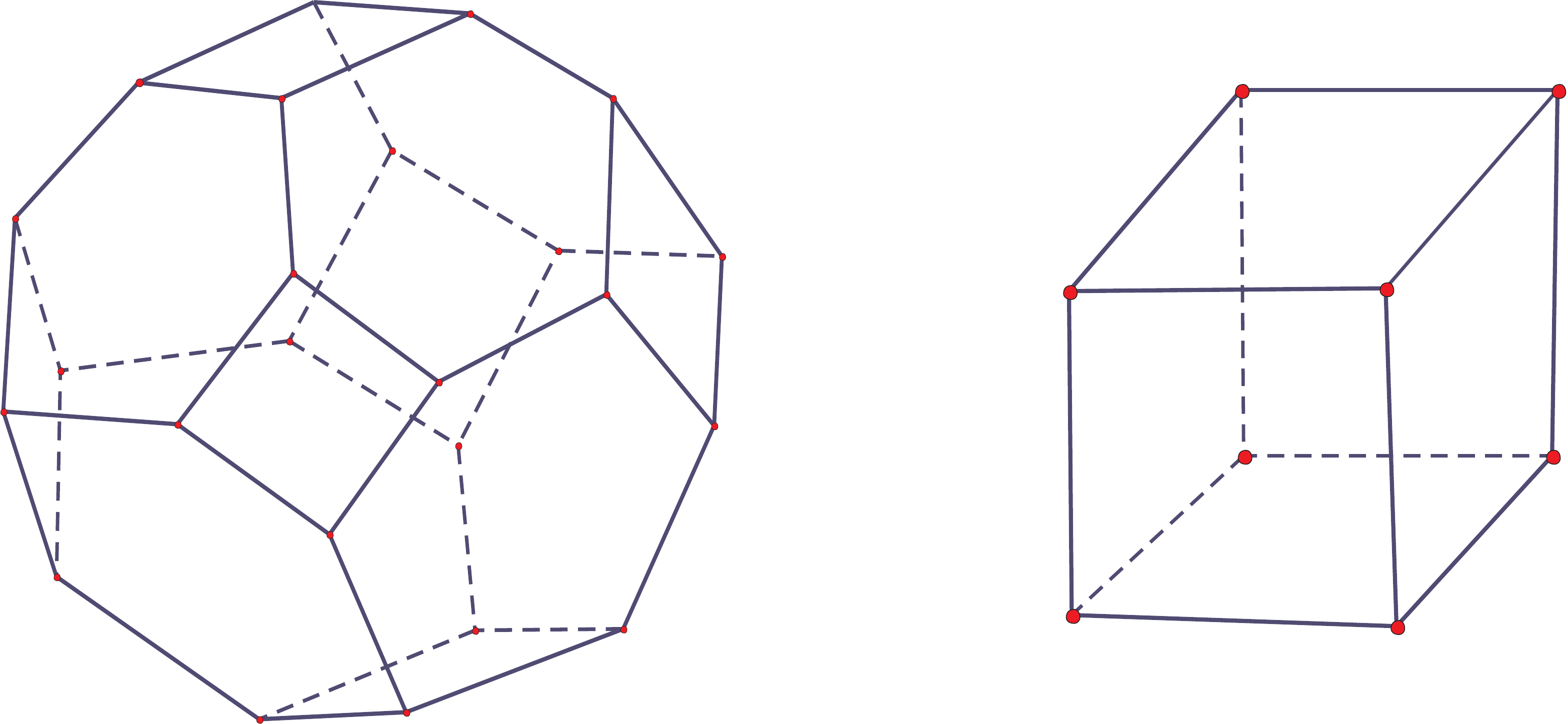}}
\caption{Permutohedron $Pe^{3}$ and cube $I^{3}$}\label{pc}
\end{figure}

\begin{example}\label{perm-cub}
(i) The permutohedron $Pe^{n-1}$ is represented as the graphical
zonotope $\mathcal{Z}_{K_n}$ corresponding to the complete graph
$K_n$ on $n$ vertices (Figure \ref{pc}).

(ii) The cube $I^{n-1}$ is represented as the graphical zonotope
$\mathcal{Z}_{T_n}$ corresponding to an arbitrary tree $T_n$ on
$n$ vertices. This shows that the graph $\Gamma$ is not determined
by the combinatorial type of the zonotope $\mathcal{Z}_\Gamma$.
\end{example}

\section{$q$-analog of chromatic symmetric function of graph}

Stanley's chromatic symmetric function $\Psi(\Gamma)$ can be
obtained in a purely algebraic way. A combinatorial Hopf algebra
$\mathcal{H}$ is a graded, connected Hopf algebra equipped with
the multiplicative linear functional
$\zeta:\mathcal{H}\rightarrow\mathbf{k}$ to the ground field
$\mathbf{k}$. For the theory of combinatorial Hopf algebras see
\cite{ABS}. Consider the combinatorial Hopf algebra of graphs
$\mathcal{G}$ which is linearly generated over a field
$\mathbf{k}$ by simple finite graphs with the product defined by
disjoint union $\Gamma_1\cdot\Gamma_2=\Gamma_1\sqcup\Gamma_2$ and
the coproduct
$$\Delta(\Gamma)=\sum_{I\subset
V}\Gamma\mid_I\otimes\Gamma\mid_{V\setminus I},$$ where
$\Gamma\mid_I$ denotes the induced subgraph on $I\subset V$. The
structure of $\mathcal{G}$ is completed by the character
$\zeta:\mathcal{G}\rightarrow\mathbf{k}$ defined to be
$\zeta(\Gamma)=1$ for $\Gamma$ with no edges and $\zeta(\Gamma)=0$
otherwise. Then it turns out that $\Psi(\Gamma)$ is the image of
the unique morphism of combinatorial Hopf algebras to symmetric
functions $\Psi:\mathcal{G}\rightarrow Sym$, (\cite[Example
4.5.]{ABS}).

An important part of the structure of the Hopf algebra
$\mathcal{G}$ is the antipode
$S:\mathcal{G}\rightarrow\mathcal{G}$. The cancellation-free
formula for the antipode in terms of acyclic orientations of a
graph $\Gamma$ is obtained in \cite{HM}. We recall some basic
definitions. Terminology comes from matroid theory. Given a graph
$\Gamma=(V,E)$, for a collection of edges $F\subset E$ denote by
$\Gamma_{V,F}$ the graph on $V$ with the edge set $F$. A {\it
flat} $F$ of the graph $\Gamma$ is a collection of its edges such
that components of $\Gamma_{V,F}$ are induced subgraphs. The rank
$\mathrm{rk}(F)$ is the size of spanning forests of
$\Gamma_{V,F}$. We have that $|V|=\mathrm{rk}(F)+c(F)$, where
$c(F)$ is the number of components of $\Gamma_{V,F}$. By
contracting edges from a flat $F$ we obtain the graph $\Gamma/F$.
Finally, let $a(\Gamma)$ be the number of acyclic orientations of
$\Gamma$. The formula of Humpert and Martin is as follows

\begin{equation}\label{antipode}
S(\Gamma)=\sum_{F\in\mathcal{F}(\Gamma)}(-1)^{c(F)}a(\Gamma/F)\Gamma_{V,F},
\end{equation}
where the sum is over the set of flats $\mathcal{F}(\Gamma)$.

The following modification of the character $\zeta$ is considered
in \cite{BHM} in a wider context of the combinatorial Hopf algebra
of simplicial complexes. Define
$\zeta_q(\Gamma)=q^{\mathrm{rk}(\Gamma)}$, which determines the
algebra morphism $\zeta_q:\mathcal{G}\rightarrow\mathbf{k}(q)$,
where $\mathbf{k}(q)$ is the field of rational functions in $q$.
This character produces the unique morphism
$\Psi_q:\mathcal{G}\rightarrow QSym$ to quasisymmetric functions
over $\mathbf{k}(q)$. The expansion of $\Psi_q(\Gamma)$ in the
monomial basis of quasisymmetric functions is determined by the
universal formula \cite[Theorem 4.1]{ABS}

$$\Psi_q(\Gamma)=\sum_{\alpha\models
n}(\zeta_q)_\alpha(\Gamma)M_\alpha.$$ The sum above is over all
compositions of the integer $n=|V|$ and the coefficient of the
expansion corresponding to the composition
$\alpha=(a_1,\ldots,a_k)\models n$ is given by

$$(\zeta_q)_\alpha(\Gamma)=\sum_{I_1\sqcup\ldots\sqcup
I_k=V}q^{\mathrm{rk}(\Gamma\mid_{I_1})+\cdots+\mathrm{rk}(\Gamma\mid_{I_k})},$$
where the sum is over all set compositions of $V$ of the type
$\alpha$. The coefficients $(\zeta_q)_\alpha(\Gamma)$ depend only
on the partition corresponding to a composition $\alpha$, so the
function $\Psi_q(\Gamma)$ is actually symmetric and it can be
expressed in the monomial basis of symmetric functions.

The invariant $\Psi_q(\Gamma)$ is more subtle than $\Psi(\Gamma)$.
Obviously $\Psi_0(\Gamma)$ is the chromatic symmetric function of
a graph $\Gamma$. It remains open to find two nonisomorphic graphs
$\Gamma_1$ and $\Gamma_2$ with the same $q$-chromatic symmetric
functions $\Psi_q(\Gamma_1)=\Psi_q(\Gamma_2)$. Let
$$\chi_q(\Gamma,d)=\mathbf{ps}(\Psi_q(\Gamma))(d)$$ be the $q$-analog
of the chromatic polynomial $\chi(\Gamma,d)$. It is a consequence
of a general fact for combinatorial Hopf algebras (see \cite{ABS})
that

\begin{equation}\label{inverse}
\chi_q(\Gamma,-1)=(\zeta_q\circ S)(\Gamma).
\end{equation}

\begin{example}\label{exm1}

Consider the graph $\Gamma$ on four vertices with the edge set
$E=\{12,13,23,34\}$. We find that
$$\Psi_q(\Gamma)=24m_{1,1,1,1}+(8q+4)m_{2,1,1}+(2q^{2}+4q)m_{2,2}+(3q^{2}+q)m_{3,1}+q^{3}m_4.$$
By principal specialization and taking into account that
$$\mathbf{ps}(m_{\lambda_1^{i_1},\ldots,\lambda_k^{i_k}})(d)=
\frac{(i_1+\cdots+i_k)!}{i_1!\cdots i_k!}{d\choose
i_1+\cdots+i_k},$$ we obtain

$$\chi_q(\Gamma,d)=d(d-1)^{2}(d-2)+qd(d-1)(4d-5)+4q^{2}d(d-1)+q^{3}d,$$
which by Theorem \ref{main} gives

$$f(\mathcal{Z}_\Gamma,q)=12+18q+8q^{2}+q^{3}.$$
\end{example}

\section{Proof of Theorem \ref{main}}

By applying $(\ref{inverse})$ and the formula for antipode
$(\ref{antipode})$ we obtain

$$(-1)^{|V|}\chi_{-q}(\Gamma,-1)=(-1)^{|V|}\sum_{F\in\mathcal{F}(\Gamma)}(-1)^{c(\Gamma)}a(\Gamma/F)(-q)^{\mathrm{rk}(F)}.$$
It follows that the statement of the theorem is equivalent to the
following expression of the $f$-polynomial

\begin{equation}\label{f-polynomial}
f(\mathcal{Z}_\Gamma,q)=\sum_{F\in\mathcal{F}(\Gamma)}a(\Gamma/F)q^{\mathrm{rk}(F)}.
\end{equation}
Therefore it should be shown that components of $f$-vectors are
determined by

\begin{equation}\label{f-vector}
f_k(\mathcal{Z}_\Gamma)=\sum_{\begin{array}{cc}F\in\mathcal{F}(\Gamma)\\
\mathrm{rk}(F)=k \end{array}}a(\Gamma/F), \ \ 0\leq k\leq n-1.
\end{equation}
By duality between the face lattice of $\mathcal{Z}_\Gamma$ and
the face lattice of the fan $\mathcal{F}_{\mathcal{H}_\Gamma}$ we
have

$$f_k(\mathcal{Z}_\Gamma)=f_{n-k-1}(\mathcal{F}_{\mathcal{H}_\Gamma}).$$

Let $L(\mathcal{H}_\Gamma)$ be the intersection lattice of the
graphical arrangement $\mathcal{H}_\Gamma$. For a subspace $X\in
L(\mathcal{H}_\Gamma)$ there is an arrangement of hyperplanes

$$\mathcal{H}_\Gamma^{X}=\{X\cap H\mid X\nsubseteq H,
H\in\mathcal{H}_\Gamma\}$$ whose intersection lattice
$L(\mathcal{H}_\Gamma^{X})$ is isomorphic to the upper cone of $X$
in $L(\mathcal{H}_\Gamma)$. Since $\mathcal{H}_\Gamma$ is central
and essential we have

\begin{equation}\label{regions}
f_{n-k-1}(\mathcal{F}_{\mathcal{H}_\Gamma})=\sum_{\begin{array}{cc}X\in
L(\mathcal{H}_\Gamma)\\ \mathrm{dim}(X)=n-k-1
\end{array}}r(\mathcal{H}^{X}_\Gamma),
\end{equation}
where $r(\mathcal{H}^{X}_\Gamma)$ is the number of regions of the
arrangement $\mathcal{H}^{X}_\Gamma$, see \cite[Theorem 2.6]{S2}.

The intersection lattice $L(\mathcal{H}_\Gamma)$ is isomorphic to
the lattice of flats of the graphical matroid $M(\Gamma)$. By this
isomorphism to a flat $F$ of rank $k$ corresponds the intersection
subspace $X^{F}=\cap_{\{i,j\}\in F}H_{e_i-e_j}$ of dimension
$n-k-1$. It is easy to see that arrangements
$\mathcal{H}_\Gamma^{X^{F}}$ and $\mathcal{H}_{\Gamma/F}$
coincide, which by $(\ref{vertices-acyclic})$ and comparing
formulas $(\ref{f-vector})$ and $(\ref{regions})$ proves theorem.

\section{Examples}

By applying Theorem \ref{main} we obtain the following
interpretation of identities elaborated in \cite[Propositions 17,
19]{BHM}.

\begin{example}\label{perm-cube}
(i) For the permutohedron $Pe^{n-1}=\mathcal{Z}_{K_n}$, the
$f$-polynomial is given by
$$f(\mathcal{Z}_{K_n},q)=A_n(q+1),$$ where $A_n(q)=\sum_{\pi\in
S_n}q^{\mathrm{des}(\pi)}$ is the Euler polynomial. Recall that
$\mathrm{des}(\pi)$ is the number of descents of a permutation
$\pi\in S_n$. It recovers the fact that the $h$-polynomial of the
permutohedron $Pe^{n-1}$ is the Euler polynomial $A_n(q)$.

(ii) For the cube $I^{n-1}=\mathcal{Z}_{T_n}$, where $T_n$ is a
tree on $n$ vertices, the $f$-polynomial is given by

$$f(\mathcal{Z}_{T_n},q)=(q+2)^{n-1}.$$

\end{example}

\begin{figure}[h!h!]
\centerline{\includegraphics[width=10cm]{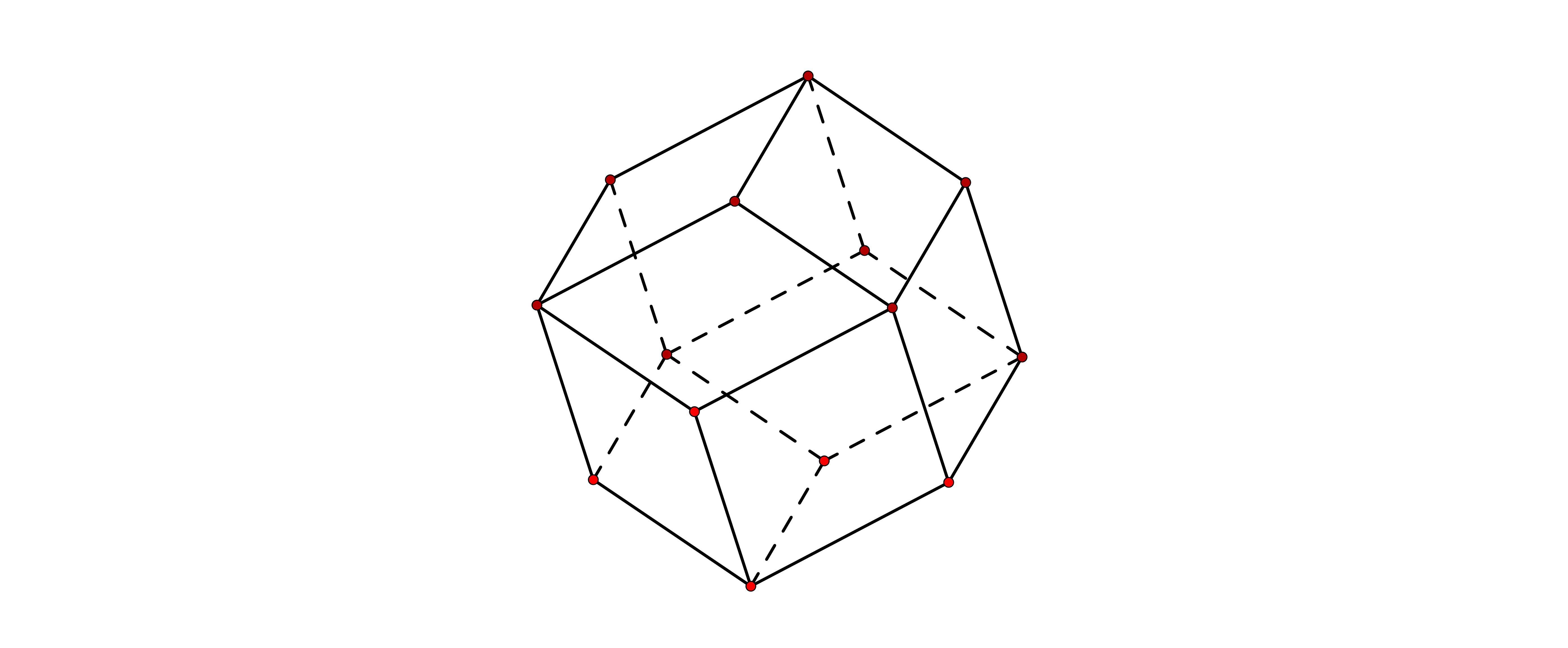}} \caption{Rhombic
dodecahedron $\mathcal{Z}_{C_4}$} \label{rd}
\end{figure}

\begin{proposition}
The $f$-polynomial of the graphical zonotope $\mathcal{Z}_{C_n}$
associated to the cycle graph $C_n$ on $n$ vertices is given by

$$f(\mathcal{Z}_{C_n},q)=q^{n}+q^{n-1}+(q+2)^{n}-2(q+1)^{n}.$$
\end{proposition}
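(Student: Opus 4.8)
The plan is to apply Theorem~\ref{main} in the equivalent form $(\ref{f-polynomial})$, which gives
$$f(\mathcal{Z}_{C_n},q)=\sum_{F\in\mathcal{F}(C_n)}a(C_n/F)\,q^{\mathrm{rk}(F)}.$$
Thus the computation reduces to listing the flats of $C_n$ and, for each, determining its rank and the number of acyclic orientations of the contraction $C_n/F$.

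First I would describe $\mathcal{F}(C_n)$ directly from the definition of a flat. Deleting edges of $C_n$ breaks the cycle into arcs, and $F$ is a flat exactly when each arc is an induced subgraph of $C_n$. If no edge is deleted, then $F=E$ is a flat of rank $n-1$. If exactly one edge is deleted, the single remaining arc spans all $n$ vertices and its induced subgraph is the entire cycle (it reacquires the deleted edge), so $F$ is \emph{not} a flat; these are the only subsets excluded. If two or more edges are deleted, every arc is a proper path and any edge of $C_n$ joining two of its vertices already lies on that path, so each arc is induced and $F$ is a flat. Equivalently, the flats of $C_n$ are the edge subsets of size at most $n-2$ together with $E$ itself.

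Next I would evaluate the two ingredients of $(\ref{f-polynomial})$. A flat $F$ with $|F|=j\le n-2$ is independent, so $\mathrm{rk}(F)=j$, and there are $\binom{n}{j}$ of them. Contracting $F$ collapses each arc to a single vertex and produces a cycle on the remaining $n-j$ vertices, so $C_n/F\cong C_{n-j}$, where for $n-j=2$ this is the two-vertex multigraph with two parallel edges. Using $a(C_m)=(-1)^m\chi(C_m,-1)=2^m-2$ (valid also for the digon, where $a=2=2^2-2$), each such flat contributes $a(C_n/F)=2^{\,n-j}-2$. The remaining flat $F=E$ has rank $n-1$ and $C_n/E$ is a single vertex, contributing $q^{n-1}$. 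Collecting terms gives
$$f(\mathcal{Z}_{C_n},q)=q^{n-1}+\sum_{j=0}^{n-2}\binom{n}{j}\bigl(2^{\,n-j}-2\bigr)q^{j}.$$

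Finally I would extend the two sums to the full range $0\le j\le n$ via $\sum_{j}\binom{n}{j}2^{\,n-j}q^{j}=(q+2)^n$ and $\sum_{j}\binom{n}{j}q^{j}=(q+1)^n$, then subtract the $j=n-1$ and $j=n$ terms that are absent; the two stray $2nq^{n-1}$ contributions cancel and the rest rearranges to $q^{n}+q^{n-1}+(q+2)^n-2(q+1)^n$, as claimed. I expect the main obstacle to be the flat analysis rather than the algebra: one must correctly exclude the $(n-1)$-edge (Hamiltonian path) subsets, which are \emph{not} flats, and verify that the degenerate contraction with $n-j=2$ still satisfies $a=2^{\,n-j}-2$. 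Pinning down the exact range of summation is precisely what separates the closed form above from a naive full binomial expansion.
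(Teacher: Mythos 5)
Your proof is correct and follows essentially the same route as the paper: both reduce to formula~(\ref{f-polynomial})/(\ref{f-vector}), identify the flats of $C_n$ as the edge subsets of size at most $n-2$ together with $E$ (equivalently, those whose complement has $k\geq 2$ edges or is empty), use $C_n/F\cong C_{n-j}$ with $a(C_m)=2^m-2$, and finish with the binomial expansion. Your write-up is simply more detailed than the paper's, which leaves implicit the exclusion of the $(n-1)$-edge subsets, the contribution $q^{n-1}$ of the flat $F=E$, the digon case, and the final algebra.
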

\begin{proof}
A flat $F\in\mathcal{F}(C_n)$ is determined by the complementary
set of edges. If $\mathrm{rk}(F)=n-k, k>1$ then the complementary
set has $k$ edges and $C_n/F=C_k$. Since $a(C_k)=2^{k}-2,k>1$, by
formula $(\ref{f-vector})$, we obtain

$$f_{n-k}(\mathcal{Z}_{C_n})=(2^{k}-2){n\choose k}, 2\leq k\leq n,$$
which leads to the required formula.

\end{proof}
Specially, for $n=4$ the resulting zonotope is the rhombic
dodecahedron (see Figure \ref{rd}). We have

$$f(\mathcal{Z}_{C_4},q)=14+24q+12q^{2}+q^{3}.$$

\begin{proposition}\label{vee}
Let $\Gamma=\Gamma_1\vee_v\Gamma_2$ be the wedge of two connected
graphs $\Gamma_1$ and $\Gamma_2$ at the common vertex $v$. Then
$$f(\mathcal{Z}_\Gamma,q)=f(\mathcal{Z}_{\Gamma_1},q)f(\mathcal{Z}_{\Gamma_2},q).$$
\end{proposition}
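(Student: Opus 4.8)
The plan is to work directly from the expression \eqref{f-polynomial} for the $f$-polynomial, namely $f(\mathcal{Z}_\Gamma,q)=\sum_{F\in\mathcal{F}(\Gamma)}a(\Gamma/F)q^{\mathrm{rk}(F)}$, and to show that the sum over flats factors as a product of the corresponding sums for $\Gamma_1$ and $\Gamma_2$. The structural input is that the two graphs meet only at the cut vertex $v$ and share no edges, so the edge set splits as $E=E_1\sqcup E_2$ and no cycle of $\Gamma$ uses edges of both parts. Consequently the graphical matroid splits as a direct sum $M(\Gamma)=M(\Gamma_1)\oplus M(\Gamma_2)$, and this is what makes every ingredient of \eqref{f-polynomial} multiplicative.

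First I would set up the bijection between flats. For a flat $F\in\mathcal{F}(\Gamma)$ put $F_1=F\cap E_1$ and $F_2=F\cap E_2$; I claim $F\mapsto(F_1,F_2)$ is a bijection $\mathcal{F}(\Gamma)\xrightarrow{\sim}\mathcal{F}(\Gamma_1)\times\mathcal{F}(\Gamma_2)$. The verification is a component analysis: because $v$ is the only shared vertex, every component of $\Gamma_{V,F}$ lies in a single part unless it is the component $C$ of $v$, which is the union $C_1\cup C_2$ of the $v$-components on each side glued along $v$; and since no edge of $\Gamma$ joins $V_1\setminus\{v\}$ to $V_2\setminus\{v\}$, the subgraph induced on such a component is the $v$-glued union of the subgraphs induced in $\Gamma_1$ and $\Gamma_2$. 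Hence each component of $\Gamma_{V,F}$ is induced in $\Gamma$ exactly when the corresponding components on each side are induced, i.e. $F$ is a flat iff $F_1$ and $F_2$ are flats. Counting components gives $c(F)=c(F_1)+c(F_2)-1$ and $|V|=|V_1|+|V_2|-1$, so $\mathrm{rk}(F)=\mathrm{rk}(F_1)+\mathrm{rk}(F_2)$; the exponents in \eqref{f-polynomial} therefore add.

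The crucial step is the factorization of the acyclic-orientation count. Here I would first observe that contracting a flat respects the wedge structure: $\Gamma/F=(\Gamma_1/F_1)\vee_{\bar v}(\Gamma_2/F_2)$, where $\bar v$ is the image of $v$, which is again a cut vertex. The key lemma, which I expect to be the main point, is that acyclic orientations are multiplicative across a cut vertex, $a(G_1\vee_w G_2)=a(G_1)\,a(G_2)$. This holds because a cut vertex cannot lie on any cycle joining the two sides---such a cycle would have to pass through $w$ twice---so every cycle of $G_1\vee_w G_2$, and hence every directed cycle under any orientation, is contained in a single part; an orientation is therefore acyclic iff its restrictions to $G_1$ and $G_2$ are both acyclic, and these restrictions may be chosen independently. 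Applying this to $\Gamma/F$ yields $a(\Gamma/F)=a(\Gamma_1/F_1)\,a(\Gamma_2/F_2)$.

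Finally I would assemble the pieces: substituting the bijection of flats, the additivity of rank, and the multiplicativity of $a(\cdot)$ into \eqref{f-polynomial} turns the single sum over $\mathcal{F}(\Gamma)$ into a product of two independent sums,
$$f(\mathcal{Z}_\Gamma,q)=\sum_{F_1\in\mathcal{F}(\Gamma_1)}\sum_{F_2\in\mathcal{F}(\Gamma_2)}a(\Gamma_1/F_1)\,a(\Gamma_2/F_2)\,q^{\mathrm{rk}(F_1)+\mathrm{rk}(F_2)}=f(\mathcal{Z}_{\Gamma_1},q)\,f(\mathcal{Z}_{\Gamma_2},q),$$
which is the claimed identity. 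The only genuinely graph-theoretic content is the cut-vertex multiplicativity of acyclic orientations; everything else is the bookkeeping expressing that the graphical matroid of a wedge is a direct sum.
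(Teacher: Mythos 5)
Your proof is correct and takes essentially the same route as the paper's: both decompose each flat $F$ as $F_1\cup F_2$ via the matroid direct sum $M(\Gamma)=M(\Gamma_1)\oplus M(\Gamma_2)$, observe $\Gamma/F=\Gamma_1/F_1\vee\Gamma_2/F_2$ with $a(\Gamma/F)=a(\Gamma_1/F_1)a(\Gamma_2/F_2)$ and $\mathrm{rk}(F)=\mathrm{rk}(F_1)+\mathrm{rk}(F_2)$, and then factor the sum in \eqref{f-polynomial}. The only difference is that you spell out the details (the flat bijection and the cut-vertex argument for multiplicativity of acyclic orientations) that the paper labels ``obviously.''
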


\begin{proof}
The graphical matroids of involving graphs are related by
$M(\Gamma)=M(\Gamma_1)\oplus M(\Gamma_2)$. For the sets of flats
it holds $\mathcal{F}(\Gamma)=\{F_1\cup F_2\mid
F_i\in\mathcal{F}(\Gamma_i),i=1,2\}$. For $F=F_1\cup F_2$ we have
$\Gamma/F=\Gamma_1/F_1\vee_{[v]}\Gamma_2/F_2,$ where $[v]$ is the
component of the vertex $v$ in $\Gamma_{V,F}$. Obviously
$a(\Gamma/F)=a(\Gamma_1/F_1)a(\Gamma_2/F_2)$ and
$\mathrm{rk}(F)=\mathrm{rk}(F_1)+\mathrm{rk}(F_2)$. The
proposition follows from formula $(\ref{f-polynomial})$.
\end{proof}

The formula for cubes in Example $\ref{perm-cube}$ (ii) follows
from Proposition $\ref{vee}$ since any tree is a consecutive wedge
of edges and $f(I^{1},q)=q+2$. It also allows us to restrict
ourselves only to biconnected graphs. For a biconnected graph
$\Gamma$ with a disconnecting pair of vertices $\{u,v\}$ Whitney
introduced the transformation called the {\it twist} around the
pair $\{u,v\}$. This transformation does not have an affect on the
graphical matroid $M(\Gamma)$ \cite{W}.

\begin{figure}[h!h!]
\centerline{\includegraphics[width=7cm]{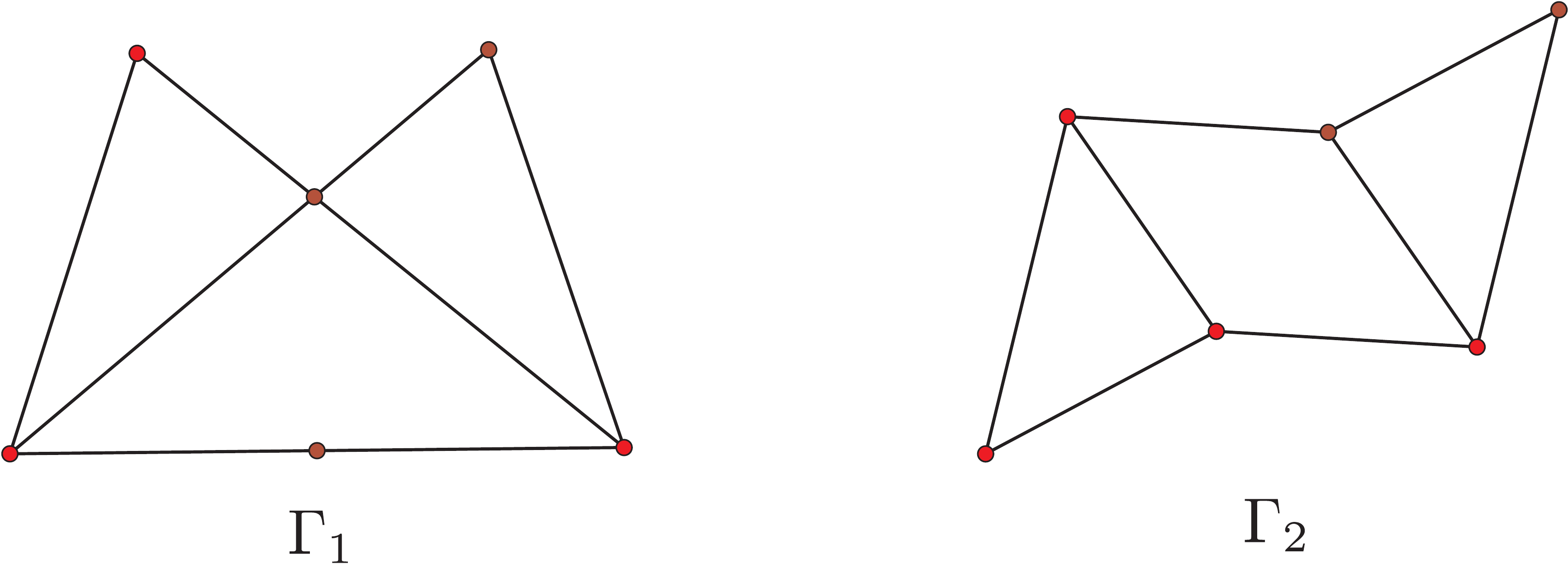}}
\caption{Biconnected graphs related by twist transformation}
\label{twographs}
\end{figure}

\begin{example}
Figure \ref{twographs} shows the pair of biconnected graphs on six
vertices obtained one from another by the twist transformation.
The corresponding zonotopes have the same $f$-polynomial

$$f(\mathcal{Z}_{\Gamma_1},q)=f(\mathcal{Z}_{\Gamma_2},q)=126+348q+358q^{2}+164q^{3}+30q^{4}+q^{5}.$$
On the other hand their $q$-chromatic symmetric functions are
different. One can check that corresponding coefficients by
$m_{3,1^{3}}$ are different

$$[m_{3,1^{3}}]\Psi_q(\Gamma_1)=(11q^{2}+8q+1)\cdot 3!,$$
$$[m_{3,1^{3}}]\Psi_q(\Gamma_2)=(10q^{2}+10q)\cdot 3!.$$
This shows that the $q$-analog of the chromatic symmetric function
of a graph is not determined by the corresponding graphical
matroid. By taking $q=0$ we obtain that even the chromatic
symmetric functions are different since
$[m_{3,1^{3}}]\Psi(\Gamma_1)=6$ and
$[m_{3,1^{3}}]\Psi(\Gamma_2)=0$.

Let us now consider Stanley's example of nonisomorphic graphs with
the same chromatic symmetric functions, see \cite{S}. We find that
the $f$-polynomials of the corresponding graphical zonotopes
differ for those graphs. From these examples we conclude that
chromatic properties of a graph and the $f$-vector of the
corresponding graphical zonotope are not related.

\end{example}

We have already noted that graphical zonotopes are generalized
permutohedra. The $h$-polynomials of simple generalized
permutohedra are determined in \cite[Theorem 4.2]{PRW}. The only
simple graphical zonotopes are products of permutohedra
\cite[Proposition 5.2]{PRW}. They are characterized by graphs
whose biconnected components are complete subgraphs. Therefore
Proposition \ref{vee} together with Example \ref{perm-cube} (i)
prove that the $h$-polynomial of a simple graphical zonotope is
the product of Eulerian polynomials, the fact obtained in
\cite[Corollary 5.4]{PRW}. Example \ref{exm1} is of this sort and
represents the hexagonal prism which is the product
$\mathcal{Z}_{K_3}\times\mathcal{Z}_{K_2}$.

\bibliographystyle{amsplain}

\end{document}